\newfont{\cyrr}{wncyr10}
\newcommand{\thmref}[1]{Theorem~\ref{#1}}
\newcommand{\propref}[1]{Proposition~\ref{#1}}
\newcommand{\lemref}[1]{Lemma~\ref{#1}}
\newtheorem{thm}{Theorem}
\newtheorem{lem}[thm]{Lemma}
\newtheorem{prop}[thm]{Proposition}
\newtheorem{rmk}{Remark}[section]
\def\({\left(}
\def\){\right)}
\def\[{\left[}
\def\]{\right]}
\def\N{\mathbb{N}}
\def\cA{\mathcal{A}}
\def\cC{\mathcal{C}}
\def\cS{\mathcal{S}}
\newcommand{\e}{\epsilon}
\newcommand{\lf}{\lfloor}
\newcommand{\rf}{\rfloor}
\title{On coprimality of consecutive elements in certain sequences}
\author{Jean-Marc Deshouillers and Sunil Naik}
\address{Jean-Marc Deshouillers \newline
Institut de Math\'ematiques de Bordeaux,
Universit\'e de Bordeaux, CNRS, Bordeaux INP
33400, Talence, France}
\email{jean-marc.deshouillers@math.u-bordeaux.fr}
\address{Sunil L Naik \newline
Department of Mathematics,
Queen's University, Jeffrey Hall, 
99 University Avenue, 
Kingston, ON K7L3N6, 
Canada}
\email{naik.s@queensu.ca}
\begin{document}
	
	\hfuzz 5pt
	
\subjclass[2020]{11B05, 11B50, 11K31, 11N56}
	
\keywords{Segal-Piatetski-Shapiro sequences, Regular sequences, Pairwise coprime, Banach density}
	
\maketitle

\begin{flushright}
\textit{Dedicated to Kalyan Chakraborty and Srinivas Kotyada \\on the occasion of their 60th anniversaries}
\end{flushright}
	
\begin{abstract}
The study of finding blocks of primes 
in certain arithmetic sequences is
one of the classical problems in number theory. 
It is also very interesting to study blocks of 
consecutive elements in such sequences that are pairwise coprime. 
In this context, 
we show that if $f$ is a twice continuously differentiable 
real-valued function on $[1, \infty)$ such that 
$f''(x) \to 0$ as $x \to \infty$ and 
$\limsup_{x \to \infty} f'(x) = \infty$, 
then there exist arbitrarily long blocks of 
pairwise coprime consecutive elements 
in the sequence $(\lfloor f(n) \rfloor)_n$. 
This result refines the qualitative part 
of a recent result by the first author, Drmota and M\"{u}llner.

We also prove that there exists a subset $\cA \subseteq \mathbb{N}$ 
having upper Banach density one such that for any two distinct integers $m, n \in \cA$, 
the integers $\lfloor f(m) \rfloor$ and $\lfloor f(n) \rfloor$ are pairwise coprime. 
Further, we show that there exist arbitrarily long blocks of consecutive elements 
in the sequence $(\lfloor f(n) \rfloor)_n$ such that no two of them are pairwise coprime. 
\end{abstract}

\section{Introduction and Statements of Results}
The Segal-Piatetski-Shapiro sequences are sequences 
of the form $(\lfloor n^c \rfloor)_n$ for a fixed $c \in \(1, \infty\) \backslash \N$. 
In \cite{PS}, Piatetski-Shapiro proved that there exist 
infinitely many primes in  the sequence $(\lfloor n^c \rfloor)_n$ if $c \in\(1, \frac{12}{11}\)$. 
In this article, we are interested in finding blocks of consecutive elements 
in $(\lfloor n^c \rfloor)_n$ that are pairwise coprime 
and of course, it is nice to have all of them distinct primes. 
In a recent work \cite{DDM}, the first author, Drmota and M\"{u}llner
showed that if $c \in (1, 2)$ and $0 < \alpha < \min\(c - 1, 1 - \frac{c}{2}\)$, 
then there exist infinitely many positive integers $n$ such that for any positive integer 
$H \leq \alpha \log n$, all the elements in the sequence 
$\{ \lf n^c \rf,  \lf (n+1)^c \rf, \cdots, \lf (n+H)^c \rf \}$ 
are pairwise coprime. We refer the reader to \cite{BS,BR,DD,EL,LM,Wat} for related literature. 

In this article, we extend the qualitative part of the above result to a larger family of regular sequences.
\begin{thm}\label{copf}
Let $f$ be a twice continuously differentiable 
real valued function on $[1, \infty)$ such that
\begin{enumerate}[(i)]
\item  $f''(x) \to 0$ as $x \to \infty$,
\item  $\limsup_{x \to \infty} f'(x) = \infty$.
\end{enumerate}
Then for any positive integer $H$, 
there exist infinitely many positive integers $n$ such that 
all the elements in the sequence 
$\{ \lf f(n) \rf,  \lf f(n+1) \rf, \cdots, \lf f(n+H) \rf \}$ 
are pairwise coprime. 	
\end{thm}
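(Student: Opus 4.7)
The plan is to engineer $n$ so that $\lf f(n+i) \rf = iK + G$ for every $i \in \{0, 1, \ldots, H\}$, where $K$ is a positive integer divisible by $P := \operatorname{lcm}(1, 2, \ldots, H)$ and $G$ is coprime to $K$. Given such an $n$, the pairwise coprimality follows by a short divisibility calculation: if $d = \gcd(iK + G, jK + G)$ and we write $d = d_1 d_2$ with $d_1 = \gcd(d, K)$ and $\gcd(d_2, K) = 1$, then $d_1 \mid iK+G$ and $d_1 \mid K$ imply $d_1 \mid G$, so $d_1 \mid \gcd(G, K) = 1$; and $d_2 \mid (j-i)K$ with $\gcd(d_2, K) = 1$ forces $d_2 \mid j - i$, hence $d_2 \leq H$ and $d_2 \mid P \mid K$, so $d_2 = 1$.

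To construct such an $n$, I would fix a small $\e > 0$ (of order $1/H^2$) and a large absolute constant $C$. Assumption (i) provides $N_\e$ with $|f''| \leq \e$ on $[N_\e, \infty)$, and assumption (ii) together with the continuity of $f'$ shows that $f'([N_\e, \infty))$ is an interval unbounded above, and hence meets every sufficiently large multiple $K = mP$ of $P$ at some point $x_K$. On the window $I_K := [x_K - T, x_K + T]$ with $T := 1/(CH\e)$ one has $|f'(x) - K| \leq 1/(CH)$, and Taylor's theorem then yields for every integer $n \in I_K$ and $i \in \{0, \ldots, H\}$ the expansion $f(n+i) = f(n) + iK + \phi_i$ with $|\phi_i| \leq 1/C + H^2\e/2 < 1/4$. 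Adding the supplementary condition $\{f(n)\} \in (1/4, 3/4)$ then forces $\lf f(n+i)\rf = iK + \lf f(n)\rf$, so the problem reduces to locating an integer $n \in I_K$ with both $\{f(n)\} \in (1/4, 3/4)$ and $\gcd(\lf f(n)\rf, K) = 1$.

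The hardest step will be realising both of these conditions simultaneously for infinitely many $K$. A second-order Taylor estimate gives $\{f(x_K + m)\} - \{f(x_K)\} \equiv \tfrac{1}{2} f''(\xi_m) m^2 \pmod 1$ for $|m| \leq T$, so as $n$ ranges over $I_K \cap \Z$ the fractional parts sweep an arc of $\R/\Z$ of length $\asymp \e T^2$; choosing $\e$ so that $\e T^2 > 1$ (i.e. $\e$ of order $1/H^2$) makes this arc overlap $(1/4, 3/4)$ in a sub-arc of positive length independently of the value of $\{f(x_K)\}$, and one gets $\gtrsim T$ integers in $I_K$ satisfying the fractional-part condition. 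On the same window the values $\lf f(n)\rf$ differ consecutively by $K + O(1)$, so their residues modulo $K$ span a short arithmetic-progression-like sub-interval of $\Z/K\Z$ of length $\asymp T$, and a standard density argument yields $\gtrsim T \cdot \varphi(K)/K$ values coprime to $K$. Taking $K = mP$ with $m$ a large prime keeps $\varphi(K)/K \gg 1/\log H$ by Mertens, and for $\e$ small enough the two sets intersect inside $I_K$; letting $m$ range over primes then produces infinitely many valid $n$, completing the argument.
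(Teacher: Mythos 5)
Your reduction to finding $n$ with $\lf f(n+i) \rf = iK + G$, where $P \mid K$ and $\gcd(G,K)=1$, is essentially the same endgame as the paper's key proposition (which uses the primorial $\Pi_H$ in place of $P$), and the Taylor step forcing $\lf f(n+i)\rf = iK + \lf f(n)\rf$ is also parallel. (A minor slip: the factorization $d = d_1 d_2$ with $d_1 = \gcd(d,K)$ and $\gcd(d_2,K)=1$ is false in general, e.g.\ $d=4$, $K=2$; argue prime by prime instead.) The genuine gap is in the step you yourself flag as hardest. Everything there rests on a \emph{lower} bound for how much $a_n := f(n) - nK$ moves as $n$ runs over $I_K$, but the hypotheses only supply \emph{upper} bounds: condition (i) gives $|f''| \leq \e$, not $|f''| \asymp \e$, and choosing $x_K$ with $f'(x_K) = K$ gives no lower bound on $|f'(x) - K|$ nearby. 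If, say, $f$ is affine with slope exactly $K$ on $I_K$ (which is compatible with (i) and (ii)), the fractional parts $\{f(n)\}$ sweep an arc of length $0$ rather than $\asymp \e T^2$, and $\lf f(n)\rf \bmod K$ takes a single value; if that value shares a prime $\leq H$ with $K$, no admissible $n$ exists in $I_K$. Even in the most favourable case the claim that the residues modulo $K$ span an interval of length $\asymp T$ is wrong: the total displacement of $a_n$ across the window is $O(\e T^2) = O(1)$ with your parameters, so only $O(1)$ consecutive residues are available, and a bounded block of consecutive integers need not contain any integer coprime to $\prod_{p \leq H} p$ once $H$ is large (Jacobsthal). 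So neither of your two conditions on $n$ can be guaranteed.

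The paper's proof supplies exactly the missing lower bound, and this is the one idea you would need to import. Instead of aiming at $f' \approx K$ with $K$ an integer, it uses $\limsup_{x\to\infty} f'(x) = \infty$ together with the slow variation of $f'$ to locate $n_0$ with $\lf f'(n_0)\rf = q\Pi_H$ \emph{and} $\{f'(n_0)\} \in \(\frac{1}{6H}, \frac{1}{5H}\)$: the fractional part of the derivative is pinned both above and below. Then $a_k = f(n_0+k) - kq\Pi_H$ increases by at least a fixed multiple of $1/H$ per step, so over a search window of length $15H\Pi_H$ (on which $f''$ is required to be as small as $(100H\Pi_H)^{-3}$, far smaller than your $\e \asymp 1/H^2$) it sweeps an interval of length greater than $2\Pi_H$. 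This guarantees some $k_0$ with $\lf a_{k_0}\rf \equiv 1 \pmod{\Pi_H}$, $\lf a_{k_0}\rf \not\equiv 0 \pmod q$ and $\{a_{k_0}\}$ small, which is precisely what replaces your two unverified conditions. Without some such device forcing a sweep of length at least $\Pi_H$, your construction cannot be completed.
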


\begin{rmk}
For $c\in (1, 2)$, the function $f(x)=x^c$ 
satisfies the conditions of \thmref{copf}. 
This implies the existence of arbitrarily long blocks of 
consecutive elements in the sequence $\(\lfloor n^c \rfloor\)_n$ 
which are pairwise coprime, thus recovering the qualitative part 
of the main result of \cite{DDM}.
\end{rmk}

\begin{rmk} 
We note that the second condition in \thmref{copf} is necessary for the existence 
of  arbitrarily long blocks of consecutive  pairwise coprime elements 
in the sequence $\(\lfloor f(n) \rfloor\)_n$. Clearly, the case when 
$\limsup_{x \to \infty} f'(x) = -\infty$ can be treated 
similarly by considering the function $-f$.  Further, 
one can consider the case when the second condition in \thmref{copf}
is replaced with $\liminf_{x \to \infty} f'(x) = -\infty$ in a similar fashion. 
Without loss of generality, we may assume that $\liminf_{x \to \infty} f'(x) > -\infty$.
Suppose  
$\limsup_{x \to \infty} f'(x)$ is finite, then $| f(m+1)-f(m)| = O(1) $ 
and hence for any $n \in \N$, there exists an interval $I$ of length $O(H)$ 
containing$\{ \lf f(n) \rf,  \lf f(n+1) \rf, \cdots, \lf f(n+H) \rf \}$. 
If $\lf f(n) \rf$,  $\lf f(n+1) \rf$, $\cdots, \lf f(n+H) \rf$ are pairwise coprime, 
then $I$ contains a set of cardinality at least $H$ 
whose elements are pairwise coprime. But this is impossible by a 
result of Erd\H{o}s and Selfridge  \cite[p. 5]{ES} for sufficiently large $H$. 
This shows that  $\limsup_{x \to \infty} f(x) = \infty$ is necessary in \thmref{copf}.
\end{rmk}

\begin{rmk} We also note that the first condition in \thmref{copf} cannot be replaced 
by the weaker condition $f'(x) = o(x)$. For $n\geq 2$, 
let $a_n$ be an even integer in $[n^{3/2}-2 , n^{3/2}]$. 
Observe that $a_n < a_{n+1} $ and $a_{n+1} - a_n = \(3/2 + o(1)\) n^{1/2}$. 
One can construct a twice  continuously differentiable 
real valued function $g$ on $[1, \infty)$ such that $g(n) = a_n$  
and $x^{1/2} < g'(x) < 2x^{1/2}$ for sufficiently large $x$. 
Then we have $g'(x) = o(x)$ and $\limsup_{x \to \infty} g'(x) = \infty$, 
but $g(n)$ is even for all $n \geq 2$. 

We can also consider
$$
g(x) ~=~ \int_{1}^{x} t^{\frac{1}{2}} \sin(2\pi t) dt.
$$
Then $g'(x) = o(x)$ and $\limsup_{x \to \infty} g'(x) = \infty$. 
Note that
$$
g(n+1) - g(n) ~=~ \int_{n}^{n+1} t^{\frac{1}{2}} \sin(2\pi t) dt 
~=~ \frac{1}{2}\int_{0}^{1} \(\(\frac{y}{2}+n\)^{\frac{1}{2}} 
- \(\frac{y+1}{2}+n\)^{\frac{1}{2}}\) \sin(\pi y) dy.
$$
Thus 
$$
|g(n+1) ~-~ g(n)| ~<~ \frac{1}{\sqrt{n}}.
$$
Hence for all sufficiently large $n$, any three consecutive elements in the sequence 
$\(\lf g(n) \rf\)_n$ are not pairwise coprime.
\end{rmk}

In order to state the next result, let us recall the following notion of density. 
For a subset $\cS$ of natural numbers, the upper Banach density (or upper uniform density,
see \cite{GTT, Ri} for more details) of $\cS$ is defined by
$$
\lim_{H \to \infty} \limsup_{x \to \infty} \frac{\#\( \cS \cap (x, x+H]\)}{H}.
$$

In this set-up, we prove the following result.
\begin{thm}\label{copBnc1}
Let $f$ be as in \thmref{copf}. Then there exists a subset $\cA$ of 
natural numbers with upper Banach density equal to $1$ such that for any 
distinct pair of integers $m$ and $n$ contained in $\cA$, 
the integers $\lf f(m) \rf$ and $\lf f(n) \rf$ are pairwise coprime.
\end{thm}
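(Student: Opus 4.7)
\textbf{Proof proposal for \thmref{copBnc1}.}

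The strategy is to construct $\cA$ as a disjoint union of finite blocks $B_1, B_2, \ldots$ of consecutive integers satisfying (i) $|B_k| \to \infty$, (ii) $\min B_k \to \infty$, and (iii) all the values $\{\lfloor f(m) \rfloor : m \in \bigcup_k B_k\}$ are pairwise coprime. Properties (i) and (ii) alone force the upper Banach density to be $1$: for any fixed $H$, once $k$ is large enough that $|B_k| \geq H$, the interval $(\min B_k - 1, \min B_k - 1 + H]$ is contained in $B_k \subseteq \cA$, so the ratio $\#(\cA \cap (x, x+H])/H$ takes the value $1$ for arbitrarily large $x$. Hence for every $H$ the inner $\limsup$ equals $1$, giving
$$
\lim_{H \to \infty} \limsup_{x \to \infty} \frac{\#(\cA \cap (x, x+H])}{H} = 1.
$$

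I would construct the $B_k$ inductively. Suppose $B_1, \ldots, B_{k-1}$ have been built and put $Q_{k-1} := \prod_{m \in B_1 \cup \cdots \cup B_{k-1}} \lfloor f(m) \rfloor$, with $r := \omega(Q_{k-1})$ distinct prime divisors. Apply \thmref{copf} at the parameter $H_k := (k+1)(r+1) + r - 1$ to pick an integer $n$, larger than $\max B_{k-1} + H_k$, for which $\lfloor f(n) \rfloor, \ldots, \lfloor f(n+H_k) \rfloor$ are pairwise coprime. Because these $H_k + 1$ values are pairwise coprime, each prime factor of $Q_{k-1}$ divides at most one of them, so at most $r$ indices in $\{n, n+1, \ldots, n+H_k\}$ are ``bad'', meaning their $\lfloor f(\cdot)\rfloor$ value shares a factor with $Q_{k-1}$. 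Removing those bad indices leaves at most $r+1$ runs of consecutive ``good'' indices containing a total of at least $H_k + 1 - r = (k+1)(r+1)$ integers, so by pigeonhole some run has length at least $k+1$; take $B_k$ to be such a run. By construction the values $\lfloor f(m) \rfloor$ for $m \in B_k$ are pairwise coprime (being a subset of a pairwise coprime family) and each is coprime to $Q_{k-1}$, hence coprime to $\lfloor f(m') \rfloor$ for every $m' \in B_1 \cup \cdots \cup B_{k-1}$. Setting $\cA := \bigcup_{k \geq 1} B_k$ completes the construction.

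The only non-trivial input is \thmref{copf} itself. The one conceptual point to watch would be the need to guarantee coprimality with a prescribed fixed integer $Q_{k-1}$, which might at first seem to demand a strengthening of \thmref{copf}; the pigeonhole extraction sidesteps this by sampling a sufficiently long pairwise coprime block and discarding the at most $\omega(Q_{k-1})$ offending indices. Consequently I do not expect any substantial obstacle beyond a careful bookkeeping of the inductive parameters.
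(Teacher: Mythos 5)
Your proof is correct, but it reaches the conclusion by a genuinely different route from the paper. The paper does not invoke \thmref{copf} as a black box; instead it re-enters the proof of \thmref{copf} at each inductive stage, choosing the new parameter $H_r$ larger than $H_{r-1}+\max\{|f(n_i+h_j)|\}$ over all previously selected indices. This forces every prime dividing an earlier value $\lf f(n_i+h_j)\rf$ to divide $\Pi_{H_r}$, hence to divide $\lf f'(n_r)\rf = q_r\Pi_{H_r}$ but not $\lf f(n_r)\rf$, so by the identity $\lf f(n_r+h)\rf = \lf f(n_r)\rf + h\lf f'(n_r)\rf$ the entire new block is automatically coprime to everything chosen before --- no indices need to be discarded. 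Your argument instead uses only the \emph{statement} of \thmref{copf}: you take a pairwise coprime block of length $H_k+1$ with $H_k=(k+1)(r+1)+r-1$, observe that each of the $r=\omega(Q_{k-1})$ primes dividing the accumulated product $Q_{k-1}$ can contaminate at most one index (by pairwise coprimality within the block), delete the at most $r$ bad indices, and extract by pigeonhole a surviving run of at least $k+1$ consecutive good indices from the at most $r+1$ resulting runs. Both arguments are sound and both yield upper Banach density $1$ for the same reason (intervals of consecutive integers of unbounded length inside $\cA$). What your approach buys is modularity: it would apply verbatim to any sequence for which the conclusion of \thmref{copf} is known, without access to the underlying congruence mechanism. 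What the paper's approach buys is the avoidance of the combinatorial deletion step, at the cost of tracking the quantity $\max|f(n_i+h_j)|$ through the induction. The only caveats in your write-up are cosmetic: you should note that the chosen block consists of indices where $|f|$ is large enough that no value $\lf f(m)\rf$ is $0$ (so that $Q_{k-1}$ and $\omega(Q_{k-1})$ are well defined and coprimality is meaningful), which is automatic from the construction in the paper since $\lf f(n+h)\rf = \lf f(n)\rf + h\,q\Pi_H$ with $h\ge H/2$.
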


In the opposite direction to \thmref{copf}, it is also possible to find arbitrarily long blocks of elements 
in the sequence $\(\lf f(n) \rf\)_n$ 
such that no two elements are pairwise coprime. In fact, we prove the following theorem.
\begin{thm}\label{conevenf}
Let $f$  be as in \thmref{copf}. 
Then for any positive integer $H$, there exist infinitely many positive integers $n$ 
such that all the elements in the sequence 
$\{ \lf f(n) \rf,  \lf f(n+1) \rf, \cdots, \lf f(n+H) \rf \}$ are even.
\end{thm}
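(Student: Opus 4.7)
We seek an integer $n$ for which $f(n+k) \pmod 2 \in [0,1)$ for every $k = 0, 1, \ldots, H$; this is exactly the condition that every $\lfloor f(n+k)\rfloor$ be even. The strategy is to find $n$ satisfying two conditions: \textbf{(A)} the first difference $D(n) := f(n+1)-f(n)$ is very close to some even integer $2\ell$; and \textbf{(B)} $f(n) \pmod 2 \in [1/4, 3/4]$. Granted (A), the hypothesis $f''(x)\to 0$ forces the next $H-1$ first differences $D(n+j)$ to remain close to $2\ell$ as well, so $f(n+j) \pmod 2$ stays close to $f(n) \pmod 2$ throughout. Condition (B) then guarantees that each $f(n+j)$ sits in $\bigcup_m [2m, 2m+1)$, making every floor even.

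Quantitatively, fix $\eta > 0$ small (depending on $H$) and use $f''(x)\to 0$ to select $N_0$ with $|f''(x)| \leq \eta/H^2$ for $x \geq N_0$. A Taylor expansion gives, for $n \geq N_0$ and $0 \leq k \leq H$,
\[
f(n+k) \;=\; f(n) + k\,D(n) + O(\eta),
\]
so it suffices to exhibit $n \geq N_0$ satisfying (A) with tolerance $\eta/H$ and (B). The set of $n$ satisfying (A) is infinite: since $|D(n+1) - D(n)| \leq \sup_{[n, n+2]}|f''|$ is small and $\limsup_n D(n) = \infty$ (as $\limsup_x f'(x) = \infty$), the sequence $D(n)$ passes within $\eta/H$ of every large even integer. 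Each passage produces a \emph{run} of consecutive integers $n$ for which (A) holds with the same $\ell$, and within such a run $f(n) \pmod 2$ remains essentially constant over $H+1$ steps.

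The main obstacle is to arrange (B) on at least one of these runs. Between two consecutive runs with targets $2\ell$ and $2(\ell+1)$, the nearly-constant value of $f \pmod 2$ shifts by $f(N_{\ell+1}) - f(N_\ell) \pmod 2$, where $N_\ell$ denotes the starting integer of the run at target $2\ell$. This shift equals the accumulated integral of $f'$ across the transition region on which $D$ sweeps from $2\ell$ to $2(\ell+1)$, and a Weyl-type argument---exploiting the unboundedness of $f'$ together with the slow variation enforced by $f''(x)\to 0$---shows that it becomes equidistributed modulo $2$ as $\ell \to \infty$. Consequently the sequence of constants across runs is dense in $[0,2)$ and enters $[1/4, 3/4]$ for infinitely many $\ell$, producing the required $n$. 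This equidistribution step is the key technical input of the proof, in the spirit of the argument used for \thmref{copf}.
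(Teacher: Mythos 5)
There is a genuine gap at the step you yourself flag as ``the key technical input'': the claim that the value of $f \bmod 2$ at the start of successive runs equidistributes (or is even dense) in $[0,2)$ is asserted, not proved, and it is not clear it can be proved under the paper's hypotheses. The hypotheses give only $\limsup_{x\to\infty} f'(x)=\infty$ and $f''(x)\to 0$; in particular $f'$ need not be monotone, so your picture of consecutive runs with targets $2\ell$ and $2(\ell+1)$ in order is already not guaranteed, and the starting points $N_\ell$ are defined only implicitly, so there is no obvious Weyl sum to estimate. Since condition (B) (that $f(n)\bmod 2$ lies in $[1/4,3/4]$, giving a safety margin inside $[0,1)$) is genuinely needed in your scheme --- if $f(n)\bmod 2$ is near $1$, a tiny drift pushes some $f(n+j)$ past an odd integer --- the argument is incomplete without it.

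The paper's proof sidesteps this difficulty entirely, and the contrast is instructive. Instead of choosing $n$ so that the increment $f(n+1)-f(n)$ is nearly an even integer (which freezes $\{f(n+h)/2\}$ near its initial value and therefore forces you to control that initial value), the paper runs the argument of \thmref{copf} for $f/2$ and chooses $n$ with $\{f'(n)/2\}\in\(\frac{1}{6H},\frac{1}{5H}\)$: small but bounded \emph{below}. Then $\{f(n+h)/2\}$ increases by between roughly $\frac{1}{6H}$ and $\frac{1}{4H}$ at each step, so over $10H$ steps it sweeps through more than a full period; any such slowly drifting sequence must spend at least $H$ consecutive steps in $[0,\frac12)$, and \lemref{fracx02} finishes the proof. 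No information about $\{f(n)/2\}$ is needed. If you want to rescue your approach, replace ``increment close to an even integer'' by ``increment slightly more than an even integer,'' and the equidistribution step disappears.
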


\medspace

\section{Proofs of the theorems}
The following proposition is at the heart of our proof of \thmref{copf}.
\begin{prop}\label{prcopf}
Let $H\geq 2$ be a positive integer and $\Pi_H = \prod_{p \leq H} p$ be the product 
of all primes less than or equal to $H$. Let $f \in \cC^2\([1, \infty)\)$ be a twice continuously differentiable 
real valued function on $[1, \infty)$ and $n$ be a positive integer satisfying
\begin{eqnarray}\label{H1}
&\{f(n)\} ~\leq~ \frac{1}{3},\phantom{m}  
\frac{1}{9H} ~\leq~ \{f'(n)\}  ~\leq~ \frac{1}{3H}  
\phantom{m}\text{and}\phantom{m}
|f''(x)| ~\leq~ \frac{1}{10H^2} ~~\text{ for }~~ x \in [n, n+H],\\ \label{H2}
&\lf f'(n) \rf ~\equiv~ 0 ~\(\text{ mod } \Pi_H\), \\ \label{H3}
&\gcd\(\lf f(n) \rf, ~ \lf f'(n) \rf\) ~=~ 1.
\end{eqnarray}
Then the integers in the set $\{f(n+h) : h \in [H/2, H] \cap \N\}$ are pairwise coprime.
\end{prop}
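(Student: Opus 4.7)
My plan is to use a second-order Taylor expansion to pin down $\lf f(n+h)\rf$ exactly for every integer $h \in [H/2, H]$, after which the coprimality becomes a short arithmetic argument built on \eqref{H2} and \eqref{H3}.

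I would begin by writing the Taylor expansion with integral remainder
\begin{equation*}
f(n+h) \;=\; f(n) + h\, f'(n) + R_h,
\qquad
R_h \;=\; \int_{n}^{n+h}(n+h-t)\, f''(t)\, dt,
\end{equation*}
and using the third inequality in \eqref{H1} to obtain $|R_h| \le h^2/(20 H^2) \le 1/20$ for $0 \le h \le H$. Setting $N := \lf f(n) \rf$, $M := \lf f'(n) \rf$, $\alpha := \{f(n)\}$ and $\beta := \{f'(n)\}$, this displays $f(n+h)$ as
\[
f(n+h) \;=\; (N + h M) + \bigl(\alpha + h \beta + R_h\bigr).
\]

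The heart of the argument is then to check that the correction $\alpha + h\beta + R_h$ lies in $(0,1)$ for every integer $h \in [H/2, H]$. Bounding below using $\alpha \ge 0$, $h\beta \ge (H/2)\cdot \tfrac{1}{9H} = \tfrac{1}{18}$ and $R_h \ge -\tfrac{1}{20}$ gives a value at least $\tfrac{1}{18} - \tfrac{1}{20} > 0$; bounding above using $\alpha \le \tfrac{1}{3}$, $h\beta \le H \cdot \tfrac{1}{3H} = \tfrac{1}{3}$ and $R_h \le \tfrac{1}{20}$ gives at most $\tfrac{2}{3} + \tfrac{1}{20} < 1$. This forces $\lf f(n+h)\rf = N + h M$ for all integers $h \in [H/2,H]$.

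The final step is purely arithmetic. Suppose a prime $p$ divides both $N + h M$ and $N + h' M$ for distinct $h, h' \in [H/2, H] \cap \N$; then $p \mid (h-h')M$. If $p \nmid M$, then $p \mid (h - h')$, whence $p \le |h - h'| \le H/2 \le H$, so $p \mid \Pi_H$, and by \eqref{H2} $p \mid M$, a contradiction. Hence $p \mid M$; combined with $p \mid N + hM$ this gives $p \mid N$, contradicting \eqref{H3}. The only step I expect to require genuine care is the numerical balancing in the previous paragraph: the constants $\tfrac{1}{3}$, $\tfrac{1}{9H}$, $\tfrac{1}{3H}$, $\tfrac{1}{10H^2}$ in \eqref{H1} are calibrated precisely so that the correction sits inside $(0,1)$, and it is important to keep the signs and magnitudes straight when verifying this.
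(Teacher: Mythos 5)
Your proposal is correct and follows essentially the same route as the paper: a Taylor expansion (you use the integral remainder, the paper uses the Lagrange form) showing the correction term $\{f(n)\} + h\{f'(n)\} + R_h$ lies in $(0,1)$, hence $\lf f(n+h)\rf = \lf f(n)\rf + h\lf f'(n)\rf$, followed by the same arithmetic deduction from \eqref{H2} and \eqref{H3}. Your case split on whether $p \mid \lf f'(n)\rf$ is just a reorganization of the paper's split into $p \le H$ versus $p > H$, and all your numerical estimates match the paper's.
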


\begin{proof}
By Taylor's theorem, for any integer $h \in [H/2, H]$, there exists $\theta_h \in [0,1]$ such that
\begin{equation*}
\begin{split}
f(n+h) 
&~=~ f(n) ~+~ h f'(n) ~+~ \frac{h^2}{2} f''(n+\theta_h h)\\
&~=~ \lf f(n) \rf ~+~ h \lf f'(n) \rf ~+~ \{f(n)\} ~+~ h\{f'(n)\} 
~+~ \frac{h^2}{2} f''(n+\theta_h h).
\end{split}\end{equation*}
Note that
$$
0 ~\leq~ 
f(n+h) ~-~ \lf f(n) \rf ~-~ h \lf f'(n)\rf 
~\leq~
\frac{1}{3} ~+~ \frac{1}{3} ~+~ \frac{1}{20} 
~<~ 1.
$$
This implies that for any integer $h \in [H/2,H]$, we have
\begin{equation}\label{fn+h}
\lf f(n+h) \rf  ~=~ \lf f(n) \rf ~+~ h \lf f'(n) \rf.
\end{equation}
Observe that for any prime $p \leq H$, we have $ p \nmid \lf f(n+h) \rf$ 
for any $H/2 \leq h \leq H$. 
This is because, from \eqref{H2}, we have $p\mid \lf f'(n) \rf$ and 
from \eqref{H3}, $ p \nmid \lf f(n) \rf$ 
and thus we get $p\nmid \lf f(n+h) \rf$ by \eqref{fn+h}.
Suppose that there exists a prime $p>H$ 
such that $p \mid \gcd\(\lf f(n+h) \rf, \lf f(n+k) \rf\)$ 
for some pair of integers $h$ and $k$ with $H/2 \leq k < h \leq H$. 
Then $p\mid \lf f(n+h) \rf - \lf f(n+k) \rf$. From \eqref{fn+h}, we have
$$
\lf f(n+h) \rf ~-~ \lf f(n+k) \rf ~=~ (h-k) \lf f'(n) \rf.
$$
Hence $p \mid \lf f'(n) \rf$. 
From \eqref{fn+h}, we deduce that $p \mid \lf f(n)\rf$, 
which is a  contradiction to \eqref{H3}. Thus $\lf f(n+h) \rf$ and 
$\lf f(n+k)\rf$ are coprime for any two distinct integers $h, k \in  [H/2, H]$.
\end{proof}

\medspace

We need the following ``folklore" lemma which gives an equivalence criterion 
for $\lf x \rf$ to be even.
\begin{lem}\label{fracx02}
	Let $x$ be a real number. We have
	$$
	\lf x \rf ~\equiv~ 0 ~\( \text{ mod } 2\) 
	~\iff~
	\left\{\frac{x}{2}\right\} ~\in~ \left[0,~ \frac{1}{2}\right).
	$$
\end{lem}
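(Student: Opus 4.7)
The plan is to reduce the statement to a clean decomposition of $x$ in terms of $\lfloor x/2\rfloor$ and $\{x/2\}$, and then track the parity of $\lfloor x\rfloor$ through this decomposition. There is no real obstacle here; the whole point of the lemma is to package a one-line observation into a statement that can be quoted later.

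First, I would write
$$
x ~=~ 2\lfloor x/2 \rfloor ~+~ 2\{x/2\},
$$
and note that by the definition of the fractional part we have $2\{x/2\} \in [0, 2)$. Since $2\lfloor x/2\rfloor$ is an integer, applying the floor to both sides yields
$$
\lfloor x \rfloor ~=~ 2\lfloor x/2 \rfloor ~+~ \lfloor 2\{x/2\} \rfloor.
$$

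Next, I would observe that the first summand on the right is even, so $\lfloor x \rfloor \equiv \lfloor 2\{x/2\}\rfloor \pmod{2}$. Because $2\{x/2\} \in [0,2)$, the value $\lfloor 2\{x/2\}\rfloor$ is either $0$ or $1$, and it equals $0$ precisely when $2\{x/2\} < 1$, i.e.\ when $\{x/2\} \in [0, 1/2)$. Putting these together gives the desired equivalence in both directions. The only thing to watch is to use strict versus non-strict inequalities consistently, so that the boundary case $\{x/2\}=1/2$ (where $\lfloor x\rfloor$ is odd) is excluded from the right-hand interval.
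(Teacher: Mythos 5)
Your proof is correct. The paper itself gives no argument for this lemma --- it is stated as ``folklore'' and used without proof --- so your decomposition $x = 2\lfloor x/2\rfloor + 2\{x/2\}$, followed by reading off the parity of $\lfloor x\rfloor$ from $\lfloor 2\{x/2\}\rfloor \in \{0,1\}$, supplies exactly the short justification the authors omitted; your care with the boundary case $\{x/2\} = 1/2$ is the only point where one could slip, and you handle it correctly.
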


\medspace

\subsection{Proof of \thmref{copf}}
Let $f$ be as in \thmref{copf} and $H \geq 2$ be a large positive integer. 
Also let $x_0 > 1$ be a real number such that
\begin{equation}\label{8}
|f''(x)| ~\leq~ \frac{1}{(100 H \Pi_H)^3}
\end{equation}
for $x \geq x_0$.
Let $q > H$ be a prime number which is sufficiently large. 
Then there exists a positive integer $m = m(q) > x_0 + 1$ such that
\begin{equation}\label{9}
\lf f'(m-1) \rf ~<~ \lf f'(m) \rf 
~=~ q \Pi_H.
\end{equation}
By the mean value theorem and \eqref{8},
we have for any $x \geq x_0$,
\begin{equation}\label{10}
| f'(x + 1) ~-~ f'(x) | ~\leq~ \frac{1}{(100 H \Pi_H)^3}
\end{equation}
which implies in conjunction with the inequality in \eqref{9} that
\begin{equation}\label{11}
\{f'(m)\} ~<~ \frac{1}{100H}.
\end{equation}
From \eqref{10}, \eqref{11} and the fact that $\limsup_{x \to \infty}f'(x) = \infty$, 
there exists an integer $n_0 \geq m$ such that
\begin{equation}\label{12}
\lf f'(n_0) \rf = \lf f'(m) \rf = q \Pi_H
\phantom{m}\text{and}\phantom{m} 
\{f'(n_0)\} ~\in~ \(\frac{1}{6H}, ~ \frac{1}{5H} \).
\end{equation}
Let $K = 15 H \Pi_H$. 
By Taylor's formula, for any integer $k \in [0, K]$, we have
$$
f(n_0 + k) ~=~ f(n_0) ~+~ k f'(n_0) ~+~ \frac{k^2}{2} f''(n_0 +\theta_k k)
$$
for some $\theta_k \in [0,1]$. Let $\e_k = \frac{k^2}{2} f''(n_0 +\theta_k k)$.
From \eqref{8}, we have
\begin{equation}\label{13}
	|\e_k| ~<~ \frac{1}{20H}.
\end{equation}
We can write
\begin{equation*}
	f(n_0 + k) 
	~=~ f(n_0) ~+~ k q \Pi_H ~+~ k \{f'(n_0)\} ~+~ \e_k.
\end{equation*}
Let us consider the sequence $\(a_k\)_{0 \leq k \leq K}$, 
where 
$$
a_k ~=~f(n_0 + k) - k q \Pi_H ~=~ f(n_0) ~+~ k \{f'(n_0)\} ~+~ \e_k.
$$
From \eqref{12} and \eqref{13}, we have 
$$
a_K ~-~ a_0 ~=~ K \{f'(n_0)\} ~+~ \e_K 
~>~ 15H\Pi_H \cdot \frac{1}{6H} ~-~ \frac{1}{20H}
~>~ 2 \Pi_H.
$$
For any $k \in [0, K-1]$, we get
\begin{equation}\label{akdiff}
0 ~<~ a_{k+1} ~-~ a_k 
~=~ \{f'(n_0)\} ~+~ \e_{k+1} ~-~ \e_k 
~<~ \frac{1}{5H} ~+~ \frac{1}{20H} ~+~ \frac{1}{20H}
~<~ \frac{1}{3H}.
\end{equation}
Thus, there exists an integer $k_0 \in [0, K]$ such that 
$$
\lf a_{k_0} \rf ~\equiv~  1 ~\(\text{ mod } \Pi_H\), 
~~ \lf a_{k_0} \rf ~\not\equiv~ 0 ~\(\text{ mod } q\) 
~~\text{ and }~~
\{a_{k_0}\} ~<~ \frac{1}{3}.
$$
To see this, let $b \in [a_0, a_K)$ be the smallest integer 
such that
$$
b ~\equiv~  1 ~\(\text{ mod } \Pi_H\)
\phantom{m}\text{and}\phantom{m}
~~ b ~\not\equiv~ 0 ~\(\text{ mod } q\).
$$
Such an integer exists, since $a_K - a_0 > 2 \Pi_H$. 
Let $k_0 \in [0, K]$ be the smallest integer such that 
$a_{k_0} \geq b$. Then from \eqref{akdiff}, 
we have $\lf a_{k_0} \rf = b$ and $\{a_{k_0}\} < \frac{1}{3H}$. 
We set $n=n_0+k_0$. It is easy to see that
\begin{equation}\label{eqnqProp}
\begin{split}
&\lf f'(n) \rf ~=~ q \Pi_H, \phantom{m} 
\gcd \(\lf f(n) \rf,~ q \Pi_H\) ~=~1, \\
& \{f(n)\} ~\leq~ \frac{1}{3}, \phantom{m} 
\frac{1}{9H} ~\leq~ \{f'(n)\} ~\leq~ \frac{1}{3H}, \phantom{m}
|f''(n)| ~\leq~ \frac{1}{10H^2}.
\end{split}
\end{equation}
Hence there exist infinitely many pairs positive integers $q$, $n = n(q)$ satisfying \eqref{eqnqProp}.
Now \thmref{copf} follows from \propref{prcopf}.  \qed

\medspace

\subsection{Proof of \thmref{copBnc1}}
Let $f$ be as in \thmref{copf}. Let $H_1 \geq 2$ be a natural number. 
Then from \thmref{copf}, there exists $n_1 \in \N$ such that the integers
$ \lf f(n_1) \rf,  \lf f(n_1+1) \rf, \cdots, \lf f(n_1+H_1) \rf$ 
are pairwise coprime. Let $H_2$ be a natural number strictly greater than 
$H_1 + \max_{0 \leq h \leq H_1} |f(n_1+h)|$. Now proceeding as in 
the proof of \thmref{copf}, we can find a prime $q_2 > H_2$ 
and a natural number $n_2 > n_1+H_1$ such that
$$
\lf f'(n_2) \rf ~=~ q_2 \Pi_{H_2},~~ \gcd\(\lf f(n_2) \rf, q_2 \Pi_{H_2} \) ~=~ 1 
\phantom{m}\text{and}\phantom{m}
\lf f(n_2+h) \rf ~=~ \lf f(n_2) \rf + h \lf f'(n_2) \rf
$$
for $H_2/2 \leq h \leq H_2$. Then the integers in the set 
$\{ \lf f(n_2+h) \rf : h \in [H_2/2, H_2] \cap \N\}$ are pairwise coprime. 
Also, if $p$ is a prime which divides $\lf f(n_1+h) \rf $ for some $ 0\leq h \leq H_1$, 
then $p$ divides $\Pi_{H_2}$, since $H_2 > H_1 + \max_{0 \leq h \leq H_1} |f(n_1+h)|$. 
Hence $p$ does not divide $\lf f(n_2+h) \rf$ for  any integer  $h \in [H_2/2, H_2] $.  
By induction, there exist integers 
$H_r > H_{r-1} + \max\{|f(n_i+h_j)| : h_j \in [H_i/2, H_i] \cap \N, 1 \leq i \leq r-1 \}$ 
and $n_r > n_{r-1} + H_{r-1}$ such that the integers in the set 
$\{ \lf f(n_i+h_j) \rf : h_j \in [H_i/2, H_i] \cap \N,~ 1 \leq i \leq r\}$ are pairwise coprime. 
Let
$$
\cA ~=~ \{ n_i+h_j ~:~ h_j \in [H_i/2, H_i] \cap \N,~ i \in \N\}.
$$
Then clearly the upper Banach density of $\cA$ is equal to $1$. This completes the proof of \thmref{copBnc1}. \qed

\medspace

\subsection{Proof of \thmref{conevenf}}
Arguing as in the proof of \thmref{copf} 
(cf. \eqref{8} and \eqref{12}  for the function $f/2$), 
for given $H$, we can find infinitely many positive integers $n$ such that 
$$
\left\{\frac{f'(n)}{2}\right\} 
~\in~ \( \frac{1}{6H},~ \frac{1}{5H}\)
\phantom{m}\text{and}\phantom{m} 
|f''(x)| ~\leq~ \frac{1}{1000H^3} 
$$
for $x \geq n$.
The Taylor expansion of $f$ leads to 
$$
\frac{f(n+h)}{2} ~=~ 
\frac{f(n)}{2} ~+~ h \frac{f'(n)}{2} ~+~ \e_h
$$
with $ |\e_h| < \frac{1}{50H}$ for $0 \leq h \leq 10H$.
We have
$$
\left\{ \frac{f(n+h)}{2} \right\} ~=~
\left\{ \left\{ \frac{f(n)}{2}\right\} ~+~
h \left\{\frac{f'(n)}{2}\right\} 
~+~ \e_h \right\}.
$$
Set $\xi_h = h \left\{\frac{f'(n)}{2}\right\} + \e_h$. 
Then we have
\begin{equation*}
	\xi_{10H} ~>~ 1 
	\phantom{m}\text{and}\phantom{m} 
	0 ~<~ \xi_{h+1} ~-~ \xi_h ~<~ \frac{1}{4H}
\end{equation*}
for any integer $h \in [0, 10H)$.
Thus one can find at least $H$ consecutive values of $h \in [0, 10H)$ such that $\left\{\frac{f(n+h)}{2}\right\} \in \left[ 0,~ \frac{1}{2}\right)$, 
which, thanks to \lemref{fracx02}, proves \thmref{conevenf}. \qed

\medspace

\section{Acknowledgments}
This paper was supported by the joint FWF-ANR project Arithrand:  FWF: I 4945-N and ANR-20-CE91-0006 and by the SPARC project 445.
The second author would like to thank Queen's University, Canada, and the Institute of Mathematical Sciences (IMSc), India, for providing an excellent atmosphere for work. A part of the work was completed when the second author was visiting the Institut de Math\'ematiques de Bordeaux, France, and the second author would like to acknowledge the hospitality during the visit.

\medspace

\end{document}